\documentclass[english]{amsart}

\usepackage{esint}
\usepackage[svgnames]{xcolor} 
\usepackage[colorlinks,citecolor=red,pagebackref,hypertexnames=false,breaklinks]{hyperref}
\usepackage{pgf,tikz}
\usepackage{pdfsync}

\usepackage{dsfont}
\usepackage{url}
\usepackage[utf8]{inputenc}
\usepackage[T1]{fontenc}
\usepackage{lmodern}
\usepackage{babel}
\usepackage{mathtools}  
\usepackage{amssymb}
\usepackage{lipsum}
\usepackage{mathrsfs}
\usepackage{color}

\newtheorem{theorem}{Theorem}[section]

\newtheorem{lemma}{Lemma}[section]

\newtheorem{corollary}{Corollary}[section]

\newtheorem{remark}{Remark}[section]

\numberwithin{equation}{section}

\title[two sided Gaussian estimates]{Two-sided Gaussian bounds  for  fundamental solutions of non-divergence form parabolic operators with H\"older continuous coefficients}

\author[Mourad Choulli]{Mourad Choulli}
\address{Universit\'e de Lorraine, 34 cours L\'eopold, 54052 Nancy cedex, France}
\email{mourad.choulli@univ-lorraine.fr}

\author{Giorgio Metafune}
\address{Dipartimento di Matematica e Fisica Ennio De Giorgi, Universit\`a del Salento, Via Arnesano, 73100 Lecce, Italy}
\email{giorgio.metafune@unisalento.it}

\thanks{MC is supported by the grant ANR-17-CE40-0029 of the French National Research Agency ANR (project MultiOnde). }

\date{}%{\today}

\begin{document}

\begin{abstract}
We establish  two-sided Gaussian bounds for fundamental solutions of  general non-divergence form parabolic operators with H\"older continuous coefficients. The result we obtain is essentially based on parametrix method.
\end{abstract}

\subjclass[2010]{65M80}

\keywords{Non-divergence form parabolic operator, fundamental solution, parametrix, two-sided Gaussian bounds, Dirichlet-Green function, Neumann-Green function.}

\maketitle

%\tableofcontents

\section{Introduction}

\subsection{Statement of the main result}

The tremendous  literature on Gaussian bounds for fundamental solutions of second order parabolic operators can be splitted into two classes: divergence or non-divergence operators. In the first class we only quote the deep results obtained by Aronson, following Nash's ideas,  and we refer to  \cite{FS} for a comprehensive treatment. The second class is more classical and can be found in the books \cite{Fr,LSU} where a fundamental solution is constructed, via the parametrix method, assuming H\"older continuity of the coefficients. By  construction the fundamental solution satisfies precise upper bounds but, strangely enough, lower bounds are not proved. In this note we show that the parametrix method produces also lower bounds.

Let $P=\mathbb{R}^n_x\times \mathbb{R}_t$ and set
\[
Q=\{(x,t,\xi ,\tau );\;  (x,t),(\xi ,\tau )\in P,\; \tau <t\}.
\]

The space of continuous and bounded functions $f:P\rightarrow \mathbb{R}$ is denoted by $C_b^0(P)$.

Let $f\in C_b^0(P)$. We say that $f$ is H\"older continuous with exponent $\alpha$, $0<\alpha \le 1$, if
\[
[f]_\alpha =\sup\left\{ \frac{|f(x,t)-f(x',t')|}{|(x-x',t-t')|_\alpha },\; (x,t),\; (x',t') \in P,\; (x,t)\ne (x',t')\right\}<\infty ,
\]
where
\[
|(x-x',t-t')|_\alpha=\left(|x-x'|^2+|t-t'|\right)^{\alpha/2}.
\]

We define
\[
C^\alpha (P)=\{ f\in C_b^0(P);\; [f]_\alpha <\infty\}.
\]
$C^\alpha (P)$ is a Banach space when it is endowed with its natural norm
\[
\|f\|_\alpha =\|f\|_\infty +[f]_\alpha 
\]

and we  also use the notation
\[
\{f\}_\alpha =\sup\left\{\frac{|f(x,t)-f(x',t)|}{|x-x'|^\alpha};\; x,x'\in \mathbb{R}^n,\; x\neq x'\; \mbox{and}\; t\in \mathbb{R}\right\}.
\]

We consider the second order parabolic operator 
\begin{equation} \label{defL}
L=\sum_{i,j=1}^na_{ij}(x,t)\partial ^2_{ij} +\sum_{i=1}^n b_i(x,t) \partial _i +q(x,t)-\partial_t
\end{equation}
with the following assumptions on its coefficients.

\noindent
(a1) $a_{ij}\in C^\alpha (P)$, $1\le i,j\le n$.

\noindent
(a2) The matrix $\mathbf{a}(x,t)=(a_{ij}(x,t))$, $(x,t)\in P$, is symmetric, real-valued,  and there exist constants $\kappa>0$, $M >0$ so that
\[
\kappa |\eta|^2 \le \langle \mathbf{a}(x,t)\eta ,\eta \rangle \le M |\eta |^2,\;\; (x,t)\in P,\; \eta \in \mathbb{R}^n.
\]

\noindent
(a3) $b_i$, $q \in C_b^0(P)$, $1\le i\le n$.

\noindent
(a4) There exists a constant $N_1>0$ so that 
\[ 
\sum_{i,j =1}^n[a_{ij}]_\alpha \le N_1.
\]

\noindent
(a5) There exists a  constant $N_2>0$ so that
\[
\sum_{i=1}^n \|b_i\|_\infty +\|q\|_\infty \le N_2.
\]

\noindent
(a6) $\{b_i\}_\alpha <\infty$, $1\le i\le n$, and $\{q\}_\alpha <\infty$.

Henceforth  we use for convenience the notation $\mathfrak{D}$ for $(n,\alpha ,N_1,N_2, M,\kappa )$.

In this paper the fundamental solution constructed by the parametrix method is denoted by $E=E(x,t;\xi ,\tau )$, $(x,t,\xi ,\tau )\in Q$. Recall that $E$ is a fundamental solution if $E\in C^2(Q)$, $LE=0$ and
\[
\lim_{t\rightarrow \tau}\int_{\mathbb{R}^n} E(x,t;\xi ,\tau )f(\xi )d\xi=f(x),\;\; f\in C_0^\infty (\mathbb{R}^n).
\]

\begin{theorem}\label{theorem-ge}
Let 
\[
c=\frac{1}{8M}\;\;  \mbox{and}\;\; d=\frac{4 \ln \left [ e2^{3n}(M\kappa ^{-1})^{n/2}\Gamma (n/2+1)\right]}{\kappa}.
\]
Under assumptions (a1) to (a6), there exist four constants $\aleph _i=\aleph_i(\mathfrak{D})$, $i=0,1,2,3$, $\aleph_0>0$, $\aleph_1 \ge 0$, $\aleph_2>0$ and $\aleph_3\ge 0$, such that 
\begin{align}
\aleph _0e^{-\aleph _1(t-\tau )}(t-\tau)^{-\frac{n}{2}}e^{-d\frac{|x-\xi|^2}{t-\tau}}\le E(x,&t;\xi ,\tau)\label{ge1}
\\
&\le \aleph _2e^{\aleph _3(t-\tau )}(t-\tau)^{-\frac{n}{2}}e^{-c\frac{|x-\xi|^2}{t-\tau}},\nonumber
\end{align}
for all $(x,t,\xi ,\tau )\in Q$.
\end{theorem}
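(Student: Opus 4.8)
The plan is to use the parametrix method to construct $E$ and then extract both bounds from this construction. Recall that the parametrix is the Gaussian $Z(x,t;\xi,\tau)$ built from the frozen-coefficient heat kernel, namely
\[
Z(x,t;\xi,\tau)=\left[4\pi(t-\tau)\right]^{-n/2}\left(\det \mathbf{a}(\xi,\tau)\right)^{-1/2}\exp\left(-\frac{\langle \mathbf{a}(\xi,\tau)^{-1}(x-\xi),x-\xi\rangle}{4(t-\tau)}\right),
\]
and the fundamental solution is sought in the form
\[
E(x,t;\xi,\tau)=Z(x,t;\xi,\tau)+\int_\tau^t\!\!\int_{\mathbb{R}^n}Z(x,t;y,s)\Phi(y,s;\xi,\tau)\,dy\,ds,
\]
where $\Phi$ solves the Volterra integral equation $\Phi=LZ+\int\!\int (LZ)\Phi$ obtained by applying $L$ and requiring $LE=0$. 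The upper bound $\eqref{ge1}$ is classical: one proves by the standard iteration that $\Phi$ is controlled, using assumptions (a1), (a2), (a4), that $|\Phi|$ is itself dominated by a Gaussian of the form $C(t-\tau)^{-(n+2-\alpha)/2}$ times $\exp(-c'|x-\xi|^2/(t-\tau))$, and then the convolution of two Gaussians reproduces a Gaussian with a slightly worse constant. The ellipticity ratio $M/\kappa$ governs the constant $c=1/(8M)$ in the exponent through the bound $\langle\mathbf{a}^{-1}\zeta,\zeta\rangle\ge M^{-1}|\zeta|^2$, while the lower-order terms $b_i,q$ under (a3), (a5), (a6) contribute only to the growth factor $e^{\aleph_3(t-\tau)}$.

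The genuinely new content, and the main obstacle, is the lower bound in $\eqref{ge1}$. The standard references stop at the upper bound because the correction integral $\int\!\int Z\Phi$ has no sign and could a priori cancel the leading Gaussian $Z$. My plan is to control this correction term on two regimes separately. First I would fix the explicit threshold encoded by the constant $d$: the quantity
\[
d=\frac{4\ln\left[e2^{3n}(M\kappa^{-1})^{n/2}\Gamma(n/2+1)\right]}{\kappa}
\]
is chosen so that for a suitable short time, the leading parametrix satisfies a clean pointwise lower bound $Z(x,t;\xi,\tau)\ge \aleph_0(t-\tau)^{-n/2}\exp\left(-d|x-\xi|^2/(t-\tau)\right)$; indeed from (a2) one has $\det\mathbf{a}\le M^n$ and $\langle\mathbf{a}^{-1}(x-\xi),x-\xi\rangle\le\kappa^{-1}|x-\xi|^2$, and the logarithmic/Gamma factor in $d$ is exactly what is needed to absorb the normalizing constant $[4\pi(t-\tau)]^{-n/2}(\det\mathbf{a})^{-1/2}$ into the Gaussian exponent. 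Thus $Z$ alone already dominates the claimed right-hand side on the relevant range, with the larger constant $d>c$ in the exponent giving the needed room.

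The remaining step, and where the real work lies, is to show that the correction integral cannot destroy this lower bound. I would split $Q$ into a small-time region $0<t-\tau\le T_0(\mathfrak{D})$ and the complementary region. On the small-time region I would estimate $\left|\int_\tau^t\!\!\int_{\mathbb{R}^n}Z(x,t;y,s)\Phi(y,s;\xi,\tau)\,dy\,ds\right|$ using the Gaussian bound on $\Phi$ already obtained for the upper bound; the key gain is the extra factor $(t-\tau)^{\alpha/2}$ produced by the H\"older continuity (a1), (a4), which makes the correction a factor $(t-\tau)^{\alpha/2}$ smaller than the leading term near the diagonal, so that choosing $T_0$ small forces the correction to be at most, say, half of $Z$, leaving a positive remainder bounded below by a Gaussian. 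This establishes $\eqref{ge1}$ on $0<t-\tau\le T_0$ with $\aleph_1=0$. For $t-\tau>T_0$ I would propagate the short-time lower bound by the reproduction (Chapman--Kolmogorov) identity
\[
E(x,t;\xi,\tau)=\int_{\mathbb{R}^n}E(x,t;y,s)E(y,s;\xi,\tau)\,dy,\qquad \tau<s<t,
\]
chaining $\lceil (t-\tau)/T_0\rceil$ short steps; each convolution of Gaussians preserves Gaussian lower bounds but costs a multiplicative constant per step, and accumulating these constants over the steps is exactly what produces the exponential prefactor $e^{-\aleph_1(t-\tau)}$ with $\aleph_1>0$. The delicate points are verifying that the H\"older exponent indeed yields the gain $(t-\tau)^{\alpha/2}$ uniformly in the spatial variable, and tracking the constants through the convolution so that the final exponent stays at $d$; these are the steps I expect to be the most technically demanding.
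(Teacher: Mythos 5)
Your upper-bound argument coincides with the paper's (parametrix iteration, Gaussian convolution via the beta-function lemma, then the reproducing property to pass to large times), and your near-diagonal step for the lower bound is also the paper's. The gap is the claim that, for $t-\tau\le T_0$ with $T_0$ small, the correction $\int_\tau^t\!\int_{\mathbb{R}^n}Z\Phi\,d\eta\,d\sigma$ is dominated by $\tfrac12 Z$ \emph{for all} $x,\xi$. The only available bound on the correction has the form
\[
\Big|\int_\tau^t\!\!\int_{\mathbb{R}^n}Z\Phi\,d\eta\,d\sigma\Big|\le C'\,(t-\tau)^{-\frac n2+\frac\alpha2}e^{-c\varrho^2},
\qquad \varrho=\frac{|x-\xi|}{\sqrt{t-\tau}},\quad c=\frac1{8M},
\]
while the pointwise lower bound for the parametrix is $Z\ge (4\pi M)^{-n/2}(t-\tau)^{-n/2}e^{-\varrho^2/(4\kappa)}$ (and in the direction of the largest eigenvalue of $\mathbf a(\xi,\tau)$, $Z$ genuinely decays like $e^{-\varrho^2/(4M)}$). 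Since $c<\frac1{4M}\le\frac1{4\kappa}$, the correction's Gaussian is strictly fatter than $Z$'s: the ratio of the correction bound to the lower bound on $Z$ behaves like $(t-\tau)^{\alpha/2}e^{\varrho^2\left(\frac1{4\kappa}-\frac1{8M}\right)}$, which blows up as $\varrho\to\infty$ for every fixed $t-\tau>0$. So no choice of $T_0$ gives the uniform domination you assert; and this is not an artifact of loose constants but intrinsic to the parametrix method, which always loses in the Gaussian exponent (even the upper bound for $E$ only holds with an exponent strictly below $\frac1{4M}$). Your small-time argument therefore proves the lower bound only in the near-diagonal regime $|x-\xi|^2\lesssim t-\tau$, and your subsequent chaining, being in time only, cannot repair the off-diagonal deficit: each step of the Chapman--Kolmogorov iteration would require a lower bound valid for widely separated spatial points, which is exactly what is missing.

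The missing idea is chaining in space. The paper first proves $E\ge\mu(t-\tau)^{-n/2}$ for $|x-\xi|^2\le\kappa(t-\tau)$, $t-\tau\le\delta$ (your near-diagonal step, where the comparison with the correction does work because $\varrho$ is bounded). Then, when $2|x-\xi|>\sqrt{\kappa(t-\tau)}$, it takes the smallest integer $m$ with $4|x-\xi|^2/m\le\kappa(t-\tau)$, places points $x_k=x+\frac{k}{m}(\xi-x)$ and times $\sigma_k=\tau+\frac{k}{m}(t-\tau)$, iterates the reproducing property $m$ times, and restricts each spatial integral to the ball $B(x_k,r)$ with $r=\frac14\sqrt{\kappa(t-\tau)/m}$, so that consecutive arguments always remain in the near-diagonal regime. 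Each step costs a fixed factor $\nu=\kappa^{n/2}/\bigl(eM^{n/2}2^{3n}\Gamma(n/2+1)\bigr)<1$ (the ball volume is where $\Gamma(n/2+1)$ and $2^{3n}$ enter), giving $E\ge\kappa^{-n/2}\nu^m(t-\tau)^{-n/2}$; since $m<1+4\varrho^2/\kappa$, the factor $\nu^m$ becomes the Gaussian $e^{-d\varrho^2}$ with $d=4|\ln\nu|/\kappa$, which is precisely the constant $d$ of the statement. In particular your reading of $d$ --- as what is needed to absorb the normalization of $Z$ into the exponent --- is incorrect: $Z$'s lower bound already has the much better exponent $\frac1{4\kappa}$ with a harmless multiplicative prefactor (handled by $\aleph_0$), and $d$ instead records the per-step cost of the spatial chain. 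Only after this off-diagonal bound is in hand does the paper chain in time to remove the restriction $t-\tau\le\delta$, which is the step you described.
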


\begin{remark}\label{rem-ge1}
{\rm
By inspecting the proof of Theorem \ref{theorem-ge} we see that,  in the Gaussian upper bound, we can substitute $c$ by $c^\epsilon=\frac{\epsilon}{4M}$, $0<\epsilon <1$, and $\aleph_i$ by $\aleph _i^\epsilon$, $i=2,3$, with an explicit dependence of $\aleph _2^\epsilon$ and $\aleph _3^\epsilon$ on $\epsilon$.
}
\end{remark}

\subsection{Consequences}

Let $\Omega$ be a $C^{1,1}$-bounded domain of $\mathbb{R}^n$. We denote the parabolic Dirichlet-Green (resp. Neumann-Green) function on $\Omega$ by $G_\Omega ^D$ (resp. $G_\Omega^N$). 

It is well known that, according to the maximum principle, $0\le G_\Omega ^D\le E$. Therefore as a consequence of Theorem \ref{theorem-ge}, we have

\begin{corollary}\label{corollary-ge1}
Let the coefficients of $L$ satisfy assumptions (a1) to (a6). Then the  Dirichlet-Green function $G_\Omega^D$ satisfies
\[
0\le G_\Omega ^D(x,t;\xi ,\tau)\le \aleph _2e^{\aleph _3(t-\tau )}(t-\tau)^{-\frac{n}{2}}e^{-c\frac{|x-\xi|^2}{t-\tau}}, \quad (x,t,\xi ,\tau )\in Q,
\]
where the constants in this inequality are the same as in Theorem \ref{theorem-ge}.
\end{corollary}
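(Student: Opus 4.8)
The plan is to obtain the stated bound for $G_\Omega^D$ with \emph{no} new kernel estimate, by transferring the upper inequality in \eqref{ge1} through the pointwise comparison $0\le G_\Omega^D\le E$. Indeed, once $G_\Omega^D(x,t;\xi,\tau)\le E(x,t;\xi,\tau)$ is known on $Q$ for sources $\xi\in\Omega$, the upper estimate for $E$ furnished by Theorem \ref{theorem-ge} applies verbatim to $G_\Omega^D$, with the identical constants $\aleph_2,\aleph_3$ and the identical $c$; the nonnegativity $G_\Omega^D\ge 0$ is part of the same comparison. So the entire content of the corollary reduces to justifying $0\le G_\Omega^D\le E$, which is the maximum principle applied to $L$.

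To make the comparison precise I would not work with the singular kernels directly but integrate them against a smooth datum. Fix $\tau\in\mathbb{R}$ and a nonnegative $f\in C_0^\infty(\mathbb{R}^n)$ with support in $\Omega$, and set $u_E(x,t)=\int_{\mathbb{R}^n}E(x,t;\eta,\tau)f(\eta)\,d\eta$ and $u_G(x,t)=\int_\Omega G_\Omega^D(x,t;\eta,\tau)f(\eta)\,d\eta$ on $\overline{\Omega}\times(\tau,T]$. Both solve $Lu=0$ in $(x,t)$ and tend to $f$ as $t\to\tau^+$; moreover $u_G=0$ on $\partial\Omega\times(\tau,T]$ whereas $u_E\ge 0$ there, because $E\ge 0$ and $f\ge 0$. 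Hence $w=u_E-u_G$ satisfies $Lw=0$, is nonnegative on the lateral boundary, and vanishes as $t\to\tau^+$, i.e. $w\ge 0$ on the whole parabolic boundary. To control the zeroth order coefficient $q$, whose sign is unknown, I would substitute $w=e^{\lambda t}\widetilde w$ with $\lambda\ge\|q\|_\infty$ (finite by (a5)); then $\widetilde w$ solves an equation whose zeroth order term equals $q-\lambda\le 0$, for which the weak maximum principle yields $\widetilde w\ge 0$, hence $w\ge 0$ and $u_G\le u_E$. The same argument applied to $u_G$ alone gives $u_G\ge 0$. Since $E-G_\Omega^D$ and $G_\Omega^D$ are continuous in the source variable and $\int(E-G_\Omega^D)(x,t;\eta,\tau)f(\eta)\,d\eta\ge 0$, resp. $\int_\Omega G_\Omega^D(x,t;\eta,\tau)f(\eta)\,d\eta\ge 0$, for every such $f\ge 0$, we conclude $0\le G_\Omega^D(x,t;\xi,\tau)\le E(x,t;\xi,\tau)$ for all $\xi\in\Omega$.

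The step I expect to be the main obstacle is the behavior at the initial time $t=\tau$: the maximum principle has to be applied up to the parabolic boundary, which requires that $u_E$ and $u_G$ attain the common datum $f$ continuously so that $w$ extends continuously to $t=\tau$ with value $0$. This continuity is exactly what the defining property of $E$ (and its Green-function analogue) provides, and it is the reason the comparison $0\le G_\Omega^D\le E$ is classical; replacing the singular $\delta_\xi$ by the smooth $f$ is precisely what renders this limiting argument routine. With the comparison established, the corollary is immediate from the upper estimate in \eqref{ge1}.
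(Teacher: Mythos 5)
Your proposal is correct and follows exactly the paper's route: the paper proves this corollary by invoking the maximum-principle comparison $0\le G_\Omega^D\le E$ (stated there as well known) and then applying the upper bound of Theorem \ref{theorem-ge} verbatim. Your argument merely fills in the standard details of that comparison (mollifying the source, the exponential substitution $e^{\lambda t}$ to handle the sign of $q$, and the weak maximum principle), which is consistent with, not different from, the paper's proof.
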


We say that $\Omega$ satisfies the chain condition if there exists a constant $\varpi >0$ such that for any two points $x$, $y\in \Omega$ and for any positive integer $m$ there exists a sequence $(x_i)_{0\leq i\leq m}$ of points in $\Omega$ such that $x_0=x$, $x_m=y$ and 
\[
|x_{i+1}-x_i|\leq \frac{\varpi}{m}|x-y|,\;\; i=0,\ldots ,m-1.
\]
The sequence $(x_i)_{0\leq i\leq m}$ is named a chain connecting $x$ and $y$.

Since any bounded Lipschitz domain has the chain condition (this fact can be easily deduced from \cite[Corollary A.1]{CY}), an  adaptation of the proof of \cite[Theorem 3.1]{Ch} (see also \cite{CK}) and the reproducing property enable us to get the following result. 

\begin{corollary}\label{corollary-ge2}
If  the coefficients of $L$ satisfy assumptions (a1) to (a6) then there exist five constants $c_0=c_0(\mathfrak{D})$ and $\aleph _i=\aleph_i(\mathfrak{D})>0$, $i=0,1,2,3$,  such that 
\begin{align*}
\aleph _0e^{-\aleph _1(t-\tau )}(t-\tau)^{-\frac{n}{2}}e^{-c_0\frac{|x-\xi|^2}{t-\tau}}\le G_\Omega ^N(x,&t;\xi ,\tau)
\\
&\le \aleph _2e^{\aleph _3(t-\tau )}(t-\tau)^{-\frac{n}{2}}e^{-c\frac{|x-\xi|^2}{t-\tau}}, 
\end{align*}
for all $(x,t,\xi ,\tau )\in Q$, where $c$ is as  in Theorem \eqref{theorem-ge}.
\end{corollary}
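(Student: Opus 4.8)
The first thing to record is that, unlike the Dirichlet case of Corollary \ref{corollary-ge1}, the Neumann-Green function is \emph{not} dominated by $E$: on a half-space the Neumann kernel is the full-space kernel plus its reflection, so $G_\Omega^N$ is in general larger than $E$ near $\partial\Omega$. Hence the maximum-principle shortcut is unavailable and \emph{both} inequalities require genuine work. My plan rests on two engines together with the chain condition (which holds because the $C^{1,1}$ domain $\Omega$ is in particular Lipschitz, via \cite[Corollary A.1]{CY}): the reproducing (Chapman--Kolmogorov) identity
\[
G_\Omega^N(x,t;\xi ,\tau)=\int_\Omega G_\Omega^N(x,t;z,s)\,G_\Omega^N(z,s;\xi ,\tau)\,dz,\qquad \tau<s<t,
\]
and a \emph{near-diagonal} two-sided estimate that I would prove first and then propagate.

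The near-diagonal step is the local input. For interior points and short times, $G_\Omega^N$ and $E$ differ by a solution of $L(\cdot)=0$ of lower order, so Theorem \ref{theorem-ge} transfers directly; near $\partial\Omega$ I would straighten the boundary in a $C^{1,1}$ chart and use an even reflection of the (locally frozen) operator, which makes $G_\Omega^N$ comparable to a full-space fundamental solution to which Theorem \ref{theorem-ge} applies. This should yield, for a suitable $K>0$, a short-time estimate of the form
\[
\aleph_0'(t-\tau)^{-\frac n2}\le G_\Omega^N(x,t;\xi ,\tau)\le \aleph_2'(t-\tau)^{-\frac n2}
\]
whenever $|x-\xi|^2\le K(t-\tau)$ and $t-\tau\le T_0$, uniformly up to the boundary. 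Equivalently one may invoke a parabolic Harnack inequality up to the boundary for the Neumann problem, which is the route taken in \cite{Ch}.

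For the \textbf{upper bound} I would combine this on-diagonal bound with the reproducing identity and Davies' integral-maximum-principle method (as adapted in \cite[Theorem 3.1]{Ch}): iterating over a subdivision of $(\tau,t)$ and tracking the weighted $L^2$ estimates converts the on-diagonal decay into the off-diagonal Gaussian factor $e^{-c|x-\xi|^2/(t-\tau)}$, while the conservativeness of the Neumann semigroup on the bounded $\Omega$ is absorbed into the growing prefactor $e^{\aleph_3(t-\tau)}$; the freedom in the exponent recorded in Remark \ref{rem-ge1} is what lets me recover the same constant $c$ as in Theorem \ref{theorem-ge}. For the \textbf{lower bound} I would run the chaining argument. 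Given $x,\xi$ with $|x-\xi|^2>K(t-\tau)$, fix $m\sim |x-\xi|^2/(t-\tau)$, take a chain $(x_i)_{0\le i\le m}$ from the chain condition with $|x_{i+1}-x_i|\le \frac{\varpi}{m}|x-\xi|$, split $(\tau,t)$ into $m$ equal subintervals $\tau=\tau_0<\cdots<\tau_m=t$, apply the reproducing identity $m-1$ times, and bound each factor from below by the near-diagonal lower bound (integrated over small balls around the $x_i$). Multiplying the $m$ near-diagonal constants produces a factor $e^{-c_0 m}=e^{-c_0|x-\xi|^2/(t-\tau)}$, giving \eqref{ge1} with the additional $e^{-\aleph_1(t-\tau)}$ accounting for the accumulated short-time prefactors.

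The \textbf{main obstacle} is twofold: establishing the near-diagonal lower bound \emph{uniformly up to $\partial\Omega$} (where $G_\Omega^N$ feels the reflected mass and the boundary regularity of the $C^{1,1}$ domain must be used carefully), and the bookkeeping in the chaining so that the per-step constant and the chain-condition constant $\varpi$ combine into a single exponential rate $c_0=c_0(\mathfrak D)$ independent of $x,\xi$ and of $m$. The optimisation $m\sim|x-\xi|^2/(t-\tau)$, the requirement that each step remain in the near-diagonal regime, and the matching of parabolic scales are exactly the delicate points already handled in \cite{Ch,CK}, and the present proof is an adaptation of that mechanism to the fundamental solution furnished by Theorem \ref{theorem-ge}.
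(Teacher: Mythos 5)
Your proposal takes essentially the same route as the paper: the paper's entire justification of Corollary \ref{corollary-ge2} is that the chain condition (valid for Lipschitz, hence $C^{1,1}$, domains via \cite[Corollary A.1]{CY}), the reproducing property, and an adaptation of the proofs in \cite{Ch} (upper bound) and \cite{CK} (lower bound) yield the two-sided estimate. Your fleshed-out plan --- near-diagonal two-sided bounds, the Chapman--Kolmogorov identity, chaining with $m\sim |x-\xi|^2/(t-\tau)$ steps for the lower bound, and the Green-function upper-bound machinery of \cite[Theorem 3.1]{Ch} --- is precisely that adaptation, so it matches the paper's (sketched) argument.
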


\section{Preliminaries}

In this section the coefficients of $L$ satisfy assumptions (a1) to (a5).

\subsection{Basic properties of generalized Gaussian kernels}

In the sequel we frequently use
\begin{equation}\label{GI}
\int_{\mathbb{R}}e^{-\rho ^2}d\rho =\sqrt{\pi}.
\end{equation}

The Gaussian heat kernel is defined as follows
\begin{equation}\label{hk1}
G(x,t)=\frac{1}{(4\pi t)^{\frac{n}{2}}}e^{-\frac{|x|^2}{4t}},\quad x\in \mathbb{R}^n,\; t>0.
\end{equation}

We have, according to Fubini's theorem,
\[
\int_{\mathbb{R}^n}G(x,t)dx=\left(\int_{\mathbb{R}} \frac{1}{2\sqrt{\pi t}}e^{-\frac{y ^2}{4t}}dy \right)^n,\quad t>0.
\]
Then the change of variable $\rho= \frac{y}{2\sqrt{t}}$ yields
\begin{equation}\label{hk2}
\int_{\mathbb{R}^n}G(x,t)dx=1,\quad t>0,
\end{equation}
where we used the value of the Gauss integral \eqref{GI}.

If $\mathbf{a}=(a^{ij})$ is  $n\times n$ symmetric positive definite matrix, we define the generalized Gaussian heat kernel by
\begin{equation}\label{hk3}
G_{\mathbf{a}}(x,t)= \frac{\sqrt{\mbox{det}\, \mathbf{a}}}{(4\pi t)^{\frac{n}{2}}}e^{-\frac{\langle \mathbf{a}x,x\rangle }{4t}},\;\; x\in \mathbb{R}^n,\; t>0.
\end{equation}

Let $\mathbf{d}=\mbox{diag}(d_1,\ldots ,d_n)$ be a  diagonal matrix and $\mathbf{u}$ an orthogonal matrix, that is $\mathbf{u}^t\mathbf{u}=I$, so that $\mathbf{u}\mathbf{a}\mathbf{u}^t=\mathbf{d}$. Then 
\[
\langle \mathbf{a}x,x\rangle =\langle \mathbf{d}\mathbf{u}x,\mathbf{u}x\rangle, \quad  \mbox{det}\, \mathbf{a}=\prod_{i=1}^n d_i
\]
and 
\[
\int_{\mathbb{R}^n}G_{\mathbf{a}}(x,t)dx=\int_{\mathbb{R}^n}\frac{\sqrt{\mbox{det}\, \mathbf{a}}}{(4\pi t)^{\frac{n}{2}}}e^{-\frac{\langle \mathbf{d}\mathbf{u}x,\mathbf{u}x\rangle}{4t}}dx,\quad t>0.
\]
Since $|\mbox{det}\, \mathbf{u}|=1$, the change of variable $y=\mathbf{u}x$ gives
\[
\int_{\mathbb{R}^n}G_{\mathbf{a}}(x,t)dx=\int_{\mathbb{R}^n}\frac{\sqrt{\mbox{det}\, \mathbf{a}}}{(4\pi t)^{\frac{n}{2}}}e^{-\frac{\langle\mathbf{d}x,x\rangle}{4t}}dx,\quad t>0.
\]
Applying again Fubini's theorem, we get
\begin{align}
\int_{\mathbb{R}^n}G_{\mathbf{a}}(x,t)dx&=\sqrt{\mbox{det}\, \mathbf{a}}\prod_{j=1}^n \int_{\mathbb{R}}\frac{1}{2\sqrt{\pi t}}e^{-\frac{d_i\rho ^2}{4t}}d\rho \label{hk4-}
\\
& =\sqrt{\mbox{det}\, \mathbf{a}}\prod_{j=1}^n \int_{\mathbb{R}}\frac{1}{2\sqrt{d_i\pi t}}e^{-\frac{\rho ^2}{4t}}d\rho \nonumber
\\
& =\prod_{j=1}^n \int_{\mathbb{R}}\frac{1}{2\sqrt{\pi t}}e^{-\frac{\rho ^2}{4t}}d\rho=1,\quad t>0.\nonumber
\end{align}

It is straightforward to check that $G_{\mathbf{a}}\in C^\infty (\mathbb{R}^n \times (0,\infty ))$ and, since
\[
\partial _k \langle \mathbf{a}x,x\rangle =2\sum_{j=1}^na^{kj}x^j =2(\mathbf{a}x)_k,\quad x\in \mathbb{R}^n,
\]
we have
\begin{equation}\label{hk4}
\partial _k G_{\mathbf{a}}(x,t)= -\frac{1}{2t}G_{\mathbf{a}}(x,t )(\mathbf{a}x)_k,\quad x\in \mathbb{R}^n,\; t>0.
\end{equation}
We easily derive from \eqref{hk4}
\begin{equation}\label{hk5}
\partial ^2_{k\ell} G_{\mathbf{a}}(x,t)= \frac{1}{4t^2}G_{\mathbf{a}}(x,t )(\mathbf{a}x)_k(\mathbf{a}x)_\ell -\frac{1}{2t}G_{\mathbf{a}}(x,t )a^{k\ell},\quad x\in \mathbb{R}^n,\; t>0.
\end{equation}

Let $\mathbf{a}^{-1}=(a_{ij})$. Inserting the identity 
\[
\sum_{k,\ell =1}^na_{k\ell}(\mathbf{a}x)_k(\mathbf{a}x)_\ell=  \langle\mathbf{a}^{-1}\mathbf{a}x, x\rangle=\langle  \mathbf{a}x,x\rangle 
\]
in \eqref{hk5} we obtain
\begin{equation}\label{hk6}
\sum_{k,\ell =1}^na_{k\ell}\partial ^2_{k\ell} G_{\mathbf{a}}(x,t)= \left(\frac{1}{4t^2}\langle \mathbf{a}x,x\rangle -\frac{n}{2t}\right)G_{\mathbf{a}}(x,t ),\quad x\in \mathbb{R}^n,\; t>0.
\end{equation}
On the other hand, it is straightforward to check that
\begin{equation}\label{hk7}
\partial _tG_{\mathbf{a}}(x,t) = \left(\frac{1}{4t^2}\langle \mathbf{a}x,x\rangle -\frac{n}{2t}\right)G_{\mathbf{a}}(x,t ),\quad x\in \mathbb{R}^n,\; t>0.
\end{equation}

We define the parabolic operator $L_{\mathbf{a}^{-1}}$ by
\[
L_{\mathbf{a}^{-1}}= \sum_{i,j =1}^na_{ij}\partial ^2_{ij} -\partial _t.
\]

Comparing \eqref{hk6} and \eqref{hk7} we see that $G_{\mathbf{a}}$ satisfies
\begin{equation}\label{hk8}
L_{\mathbf{a}^{-1}}G_{\mathbf{a}}(x,t)=0 ,\quad x\in \mathbb{R}^n,\; t>0.
\end{equation}

\subsection{The parametrix}

Let $\mathbf{a}^{-1}(x,t)=(a^{ij}(x,t))$, $(x,t)\in P$, where $(a^{ij}(x,t))$ is the inverse of the matrix $(a_{ij}(x,t))$, and define
\[
Z(x,t;\xi ,\tau )=G_{\mathbf{a}^{-1}(\xi ,\tau)}(x-\xi ,t-\tau ),\quad (x,t,\xi,\tau )\in Q,
\]
that is
\begin{equation}\label{fs1}
Z(x,t;\xi ,\tau )= \frac{\sqrt{\mbox{det}\,\mathbf{a}^{-1}(\xi ,\tau)}}{(4\pi (t-\tau))^{\frac{n}{2}}}e^{-\frac{\langle \mathbf{a}^{-1}(\xi ,\tau )(x-\xi ),(x-\xi )\rangle }{4(t-\tau )}},\quad (x,t,\xi,\tau )\in Q.
\end{equation}

This function is usually called the parametrix associated to the parabolic operator $L$. According to the results of the previous subsection, for any $(\xi ,\tau )\in P$, $Z(\cdot ,\cdot ;\xi ,\tau )\in C^\infty (P_\tau )$ with $P_\tau =\{(x,t)\in \mathbb{R}^n;\; t>\tau\}$, and
\begin{equation}\label{fs2}
\sum_{i,j=1}^na_{ij}(\xi ,\tau)\partial _{ij}^2Z(\cdot ,\cdot ;\xi ,\tau ) -\partial _tZ(\cdot ,\cdot ;\xi ,\tau )=0\;\; \mbox{in}\; P_\tau .
\end{equation}

Let us define
\begin{align*}
&d_i(x,t;\xi ,\tau )=-\frac{1}{2(t-\tau)}\sum_{j=1}^na^{ij}(\xi ,\tau )(x_j-\xi _j),
\\
&
d_{ij}(x,t;\xi ,\tau)=-\frac{a^{ij}(\xi ,\tau )}{2(t-\tau)}+d_i(x,t;\xi ,\tau)d_j(x,t;\xi ,\tau).
\end{align*}

From \eqref{hk4}  and \eqref{hk5} we have
\[
\partial _iZ=d_iZ\;\; \mbox{and}\;\; \partial_{ij}^2Z=d_{ij}Z.
\]
Therefore, taking into account \eqref{fs2}, we have
\begin{equation}\label{fs3}
LZ=\left[ \sum_{i,j=1}^n\left(a_{ij}(x,t)-a_{ij}(\xi ,\tau )\right)d_{ij}+\sum_{i=1}^nd_ib_i+q\right]Z=\Psi Z,
\end{equation}
where 
\[
\Psi = \sum_{i,j=1}^n\left(a_{ij}(x,t)-a_{ij}(\xi ,\tau )\right)d_{ij} +\sum_{i=1}^nd_ib_i+q.
\]

We need a pointwise estimate for $LZ$. To this end, we start with the following lemma

\begin{lemma}\label{pa1}
We have
\begin{equation}\label{fs4}
|\mathbf{a}^{-1}(x,t)\eta |\le \frac{1}{\kappa}|\eta |,\quad (x,t)\in P,\; \eta \in \mathbb{R}^n ,
\end{equation}

\begin{equation}\label{fs8+}
\sup_{1\le i,j\le n}\|a^{ij}\|_\infty \le \frac{1}{\kappa}.
\end{equation}
and 
\begin{equation}\label{fs8++}
\frac{\langle \mathbf{a}^{-1}(x,\tau )(x-\xi ), x-\xi\rangle }{4(t-\tau )}\ge \frac{1}{4M}\frac{|x-\xi]^2}{t-\tau}.
\end{equation}
\end{lemma}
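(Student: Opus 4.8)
The plan is to exploit the fact that assumption (a2) is precisely a two-sided bound on the spectrum of the symmetric matrix $\mathbf{a}(x,t)$. Fixing $(x,t)\in P$ and suppressing it from the notation, I would first observe that $\kappa|\eta|^2\le\langle\mathbf{a}\eta,\eta\rangle\le M|\eta|^2$, combined with the spectral theorem for real symmetric matrices, forces every eigenvalue $\lambda$ of $\mathbf{a}$ to lie in $[\kappa,M]$: testing the inequality on a unit eigenvector gives $\kappa\le\lambda\le M$. Consequently the eigenvalues of $\mathbf{a}^{-1}$, being the reciprocals $1/\lambda$, all lie in $[1/M,1/\kappa]$. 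This single observation underlies all three estimates, and since $\kappa,M$ are uniform in $(x,t)$ the resulting bounds are automatically uniform as well.

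For \eqref{fs4}, since $\mathbf{a}^{-1}$ is symmetric its operator norm equals its largest eigenvalue, which is at most $1/\kappa$; hence $|\mathbf{a}^{-1}\eta|\le\kappa^{-1}|\eta|$ for all $\eta$. For \eqref{fs8+}, I would write each entry as $a^{ij}=\langle\mathbf{a}^{-1}e_j,e_i\rangle$, with $e_i,e_j$ the canonical basis vectors, and estimate by Cauchy--Schwarz followed by \eqref{fs4}, obtaining $|a^{ij}|\le|\mathbf{a}^{-1}e_j|\,|e_i|\le\kappa^{-1}$; taking the supremum over $i,j$ and over $(x,t)\in P$ gives the claim.

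For \eqref{fs8++}, I would instead use the lower end of the spectrum of $\mathbf{a}^{-1}$. Diagonalizing $\mathbf{a}=\mathbf{u}^t\mathbf{d}\,\mathbf{u}$ with $\mathbf{u}$ orthogonal and $\mathbf{d}=\mathrm{diag}(\lambda_1,\dots,\lambda_n)$, one has $\langle\mathbf{a}^{-1}\eta,\eta\rangle=\sum_i\lambda_i^{-1}(\mathbf{u}\eta)_i^2\ge M^{-1}|\mathbf{u}\eta|^2=M^{-1}|\eta|^2$, using $\lambda_i\le M$ and the fact that $\mathbf{u}$ preserves the Euclidean norm. Applying this with $\eta=x-\xi$ and dividing by $4(t-\tau)>0$ yields the stated inequality.

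I expect no genuine obstacle here: the lemma is a routine exercise in the spectral theory of symmetric positive-definite matrices. The only point requiring a little care is keeping the directions of the inequalities straight under inversion, since the small eigenvalues of $\mathbf{a}$ (bounded below by $\kappa$) control the operator norm of $\mathbf{a}^{-1}$ and hence \eqref{fs4}--\eqref{fs8+}, whereas the large eigenvalues of $\mathbf{a}$ (bounded above by $M$) control the coercivity of $\mathbf{a}^{-1}$ and hence \eqref{fs8++}.
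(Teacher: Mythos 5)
Your proof is correct and is essentially the paper's argument: both reduce all three estimates to the two-sided bound in (a2) by elementary linear algebra, and your treatment of \eqref{fs8+} (entries as $\langle\mathbf{a}^{-1}\mathbf{e}_j,\mathbf{e}_i\rangle$, then Cauchy--Schwarz and \eqref{fs4}) coincides with the paper's. The only cosmetic difference is that the paper avoids explicit diagonalization, proving \eqref{fs4} by substituting $\eta\mapsto\mathbf{a}^{-1}(x,t)\eta$ in (a2) and applying Cauchy--Schwarz, and obtaining \eqref{fs8++} from the equivalence of $\langle\mathbf{a}^{-1}\eta,\eta\rangle\ge M^{-1}|\eta|^2$ with $\langle\mathbf{a}\eta,\eta\rangle\le M|\eta|^2$, whereas you make the spectral theorem explicit; both routes are equally rigorous.
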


\begin{proof}
From assumption (a2), we have
\[
\langle \mathbf{a}(x,t)\eta ,\eta \rangle \ge \kappa |\eta |^2,\;\; (x,t)\in P,\; \eta \in \mathbb{R}^n.
\]
In this inequality we get by substituting $\eta$ by $\mathbf{a}^{-1}(x,t)\eta$
\[
|\mathbf{a}^{-1}(x,t)\eta ||\eta |\ge \langle \mathbf{a}^{-1}(x,t)\eta ,\eta \rangle \ge \kappa |\mathbf{a}^{-1}(x,t)\eta |^2,\quad (x,t)\in P,\; \eta \in \mathbb{R}^n
\]
and \eqref{fs4} follows. 

Since $a^{ij}=\langle \mathbf{a}^{-1} \mathbf{e}_i, \mathbf{e}_j\rangle $, where $(\mathbf{e}_1,\ldots ,\mathbf{e}_n)$ the canonical basis of $\mathbb{R}^n$, \eqref{fs8+} follows from \eqref{fs4}.

Finally, \eqref{fs8++} is equivalent to $\langle \mathbf{a}^{-1}(x,\tau )\eta, \eta \rangle \ge \frac{1}{M}|\eta|^2$  or $\langle \mathbf{a}(x,\tau )\eta, \eta \rangle \le M|\eta|^2$, which holds by assumption.
\end{proof}

From \eqref{fs4}, we get
\begin{equation}\label{fs9}
\|d_i\|_\infty \le \frac{|x-\xi |}{2\kappa (t-\tau)}\quad  {\rm or } \quad \|d_i\|_\infty \le \frac{\varrho}{2\kappa \sqrt{t-\tau}}
\end{equation}
where
\[
\varrho =\frac{|x-\xi |}{\sqrt{t-\tau}}.
\]

It is easy to see that \eqref{fs8+}  and \eqref{fs9} entail
\begin{equation}\label{fs10}
\|d_{ij}\|_\infty \le \left(\frac{1}{2\kappa }+\frac{\varrho ^2}{4\kappa^2}\right)\frac{1}{t-\tau }.
\end{equation}

Hence
\begin{equation}\label{fs11}
\left|\sum_{i,j=1}^n\left(a_{ij}(x,t)-a_{ij}(\xi ,\tau )\right)d_{ij}\right| \le N_1\left(\frac{1}{2\kappa }+\frac{\varrho ^2}{4\kappa^2}\right)\frac{(1+\varrho ^2 )^{\frac{\alpha}{2}}}{(t-\tau)^{1-\frac{\alpha}{2}}}.
\end{equation}

On the other hand, we get from \eqref{fs9} 
\begin{equation}\label{fs9.1}
\left| \sum_{i=1}^n b_id_i +q  \right|\le  N_2 \left (\frac{\varrho}{2k \sqrt{t-\tau}}+1\right ) \le N_2\frac{1+\frac{\varrho}{2\kappa}}{(t-\tau)^{1-\frac{\alpha}{2}}},\quad t-\tau \le 1.
\end{equation}

In light of \eqref{fs11} and \eqref{fs9.1} we obtain
\begin{equation}\label{fs12}
\|\Psi \|_\infty \le N_1\left(\frac{1}{2\kappa }+\frac{\varrho ^2}{4\kappa^2}\right)\frac{(1+\varrho ^2 )^{\frac{\alpha}{2}}}{(t-\tau)^{1-\frac{\alpha}{2}}}
+N_2\frac{1+\frac{\varrho}{2\kappa}}{(t-\tau)^{1-\frac{\alpha}{2}}} ,\quad t-\tau \le 1.
\end{equation}

Now \eqref{fs8++} implies
\begin{equation}\label{fs13}
|Z(x,t)|\le \frac{1}{ (4\kappa \pi (t-\tau ))^{\frac{n}{2}}}e^{-\frac{1}{4M}\varrho ^2}.
\end{equation}

Recall that $c=\frac{1}{8M}$ and let 
\begin{equation}\label{e}
C=\frac{1}{ (4\kappa\pi )^{\frac{n}{2}}}\max_{\lambda >0}\left[N_1\left(\frac{1}{2\kappa }+\frac{\lambda ^2}{4\kappa^2}\right)(1+\lambda ^2 )^{\alpha/2}
+N_2\left(\frac{\lambda}{\kappa}+1\right)\right]e^{-c\lambda ^2}.
\end{equation}

 If $\Phi_1=LZ=\Psi Z$, then a combination of \eqref{fs12} and \eqref{fs13} gives
\begin{equation}\label{fs14}
|LZ|=|\Psi Z|\le C(t-\tau)^{-\frac{n}{2}-1+\beta }e^{-c\varrho ^2},\;\; t-\tau \le 1,
\end{equation}
with   $\beta =\frac{\alpha}{2}$.

\section{Two-sided Gaussian bounds}

In this section the coefficients of $L$ satisfy (a1) to (a6). Let $\Phi_1=LZ$,
\[
 \Phi_{\ell +1}(x,t,\xi ,\tau)= \int_\tau ^t\int_{\mathbb{R}^n}\Phi_1(x,t;\eta ,\sigma )\Phi_\ell (\eta ,\sigma ,\xi ,\tau )d\eta d\sigma ,\quad \ell \ge 1
 \]
and define
\[
\Phi=\sum_{\ell \ge 1}\Phi_\ell .
\]

Let $E$ be the fundamental solution, associated to $L$, constructed by the parametrix method. According to \cite{Fr,LSU}, $E$ is given by
\begin{equation}\label{fs18}
E(x,t;\xi,\tau )=Z(x,t;\xi ,\tau )+\int_\tau^t\int_{\mathbb{R}^n}Z(x,t;\eta ,\sigma )\Phi (\eta ,\sigma ; \xi ,\eta )d\eta d\sigma ,
\end{equation}
for all $(x,t,\xi ,\tau )\in Q$.

We refer to \cite[Chapter 1]{Fr} or to \cite[Chapter IV]{LSU} for more details.

\subsection{Preliminary estimate}

The following lemma will be useful in the sequel.

 \begin{lemma}\label{lemma2.1}$($\cite[Chapter 1, Section 4]{Fr}$)$
 Let $\lambda >0$ and $-\infty <\gamma,\delta  <1$. Then
 \begin{align*}
 \int_\tau ^t\int_{\mathbb{R}^n}(t-\sigma )^{-\frac{n}{2}-\gamma}e^{-\frac{\lambda |x-\eta |^2}{t-\sigma }}&(\sigma -\tau )^{-\frac{n}{2}-\delta}e^{-\frac{\lambda |\eta -\xi |^2}{\sigma -\tau}}d\eta d\sigma 
\\
&=\left(\frac{4\pi}{\lambda }\right)^{\frac{n}{2}}B\left(1-\gamma,1-\delta \right)(t-\tau)^{-\frac{n}{2}+1-\gamma-\delta}e^{-\frac{\lambda |x-\xi |^2}{t-\tau }},
 \end{align*}
 where $B$ is the Euler beta function.
 \end{lemma}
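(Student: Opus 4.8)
The plan is to carry out the double integral in two stages: first integrate in the space variable $\eta$ for fixed $\sigma$, then integrate in the time variable $\sigma$. Writing $s=t-\sigma$ and $r=\sigma-\tau$, so that $r+s=t-\tau$, the integrand factors as $(t-\sigma)^{-n/2-\gamma}(\sigma-\tau)^{-n/2-\delta}$ times the Gaussian product $e^{-\lambda|x-\eta|^2/s}e^{-\lambda|\eta-\xi|^2/r}$. The powers of $s$ and $r$ do not involve $\eta$, so for fixed $\sigma$ the spatial integral reduces to the classical Gaussian convolution $\int_{\mathbb{R}^n}e^{-\lambda|x-\eta|^2/s}e^{-\lambda|\eta-\xi|^2/r}\,d\eta$.

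First I would evaluate this spatial integral by completing the square in $\eta$. The quadratic exponent $\lambda\bigl(s^{-1}|x-\eta|^2+r^{-1}|\eta-\xi|^2\bigr)$ has $\eta$-leading coefficient $\lambda(r+s)/(rs)$, and the $\eta$-independent remainder after completing the square is exactly $\lambda|x-\xi|^2/(r+s)=\lambda|x-\xi|^2/(t-\tau)$. This produces the target Gaussian $e^{-\lambda|x-\xi|^2/(t-\tau)}$ in the variables $(x,\xi)$, while the remaining isotropic Gaussian integrates over $\mathbb{R}^n$ to $\bigl(\pi rs/(\lambda(r+s))\bigr)^{n/2}$, that is a dimensional constant times $\bigl((t-\sigma)(\sigma-\tau)/(t-\tau)\bigr)^{n/2}$. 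Multiplying by the prefactor, the powers of $(t-\sigma)$ and $(\sigma-\tau)$ collapse from $-n/2-\gamma$ and $-n/2-\delta$ down to $-\gamma$ and $-\delta$ respectively, and the factor $(t-\tau)^{-n/2}$ together with the Gaussian in $(x,\xi)$ pull out of the remaining $\sigma$-integral.

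What is left is the one-dimensional time integral $\int_\tau^t (t-\sigma)^{-\gamma}(\sigma-\tau)^{-\delta}\,d\sigma$. The substitution $\sigma=\tau+(t-\tau)w$, $w\in(0,1)$, turns this into $(t-\tau)^{1-\gamma-\delta}\int_0^1 (1-w)^{-\gamma}w^{-\delta}\,dw=(t-\tau)^{1-\gamma-\delta}B(1-\gamma,1-\delta)$, recognising the beta integral $B(p,q)=\int_0^1 w^{p-1}(1-w)^{q-1}\,dw$ with $p=1-\delta$ and $q=1-\gamma$. Combining with the powers extracted in the previous step gives the exponent $-n/2+1-\gamma-\delta$ on $(t-\tau)$ and the asserted structure of the identity.

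The only genuinely delicate point is convergence: the beta integral is finite precisely when $1-\gamma>0$ and $1-\delta>0$, which is exactly the hypothesis $\gamma,\delta<1$. This is what guarantees that the endpoint singularities of the time kernel at $\sigma=\tau$ and $\sigma=t$ are integrable, and it also licenses Tonelli's theorem to justify performing the $\eta$-integration before the $\sigma$-integration, all integrands being nonnegative. Thus the spatial Gaussian convolution and the endpoint integrability are the two facts doing all the work, and everything else is bookkeeping of the powers of $(t-\sigma)$, $(\sigma-\tau)$ and $(t-\tau)$.
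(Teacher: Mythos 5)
Your approach --- completing the square in $\eta$ to evaluate the spatial Gaussian convolution, then reducing the time integral to a beta integral via $\sigma=\tau+(t-\tau)w$ --- is the standard argument and is essentially the proof in Friedman's book; the paper itself offers no proof, only the citation, so there is nothing different to compare against on the level of method. The convergence discussion (the hypothesis $\gamma,\delta<1$ being exactly what makes the beta integral finite) and the appeal to Tonelli are also correct.

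There is, however, one point you glossed over, and it matters: your (correct) computation does not produce the constant asserted in the statement. With the exponents as written, $e^{-\lambda|x-\eta|^2/(t-\sigma)}$ and $e^{-\lambda|\eta-\xi|^2/(\sigma-\tau)}$, the spatial integral is, as you say,
\[
\left(\frac{\pi\, rs}{\lambda (r+s)}\right)^{\frac{n}{2}} e^{-\frac{\lambda |x-\xi|^2}{t-\tau}},
\qquad r=\sigma-\tau,\; s=t-\sigma,
\]
and carrying this through the bookkeeping yields the prefactor $\left(\pi/\lambda\right)^{n/2}$, not $\left(4\pi/\lambda\right)^{n/2}$; the two differ by $2^n$. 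So your argument, pushed to its conclusion, actually disproves the equality as literally stated, and the phrase ``the asserted structure of the identity'' hides this. The source of the discrepancy is a transcription issue in the paper: in Friedman's original lemma all three Gaussians carry a factor $4$ in the denominator of the exponent, i.e.\ $e^{-\lambda|x-\eta|^2/(4(t-\sigma))}$, $e^{-\lambda|\eta-\xi|^2/(4(\sigma-\tau))}$, $e^{-\lambda|x-\xi|^2/(4(t-\tau))}$, and then (replacing $\lambda$ by $\lambda/4$ in your computation) the constant $\left(4\pi/\lambda\right)^{n/2}$ is the right one. A careful write-up should either restore the factors of $4$ in the exponents or replace the constant by $\left(\pi/\lambda\right)^{n/2}$; note that since the lemma is only invoked in the paper to produce upper bounds (for $\Phi_2$, for the iterates $\Phi_\ell$, and in \eqref{fs19}), using the larger constant $\left(4\pi/\lambda\right)^{n/2}$ leaves all subsequent estimates valid, so nothing downstream breaks.
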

 
Let $C$ be the constant given by \eqref{e} and assume that $t-\tau \le 1$. We deduce from \eqref{fs14} 
 \begin{equation}\label{fs14+}
|\Phi_1|\le C(t-\tau)^{-\frac{n}{2}-1+\beta }e^{-c\varrho ^2}.
\end{equation}
 
 Let $\widetilde{C}= \left(\frac{4\pi}{c }\right)^{\frac{n}{2}}$. We have by applying Lemma \ref{lemma2.1}
 \[
 |\Phi_2|\le \widetilde{C} C^2B(\beta ,\beta )(t-\tau)^{-\frac{n}{2}-1+2\beta}e^{-c\varrho ^2}.
 \]
 By induction in $\ell$, wo obtain
 \[
 |\Phi_\ell |\le \widetilde{C}^{\ell -1} C^{\ell}\prod_{j=1}^{\ell -1}B(\beta ,j\beta )(t-\tau)^{-\frac{n}{2}-1+\ell \beta}e^{-c\varrho ^2},\quad \ell \ge 2.
 \]
 
If $\Gamma$ is the Euler gamma function, we recall that
 \[
B(\beta ,j\beta )=\frac{\Gamma (\beta )\Gamma (j\beta )}{\Gamma ((j+1)\beta )}.
 \]
 Therefore
\[
\prod_{j=1}^{\ell -1}B(\beta ,j\beta )=\frac{\Gamma (\beta)^\ell }{\Gamma (\ell \beta )}
\]
and hence
\[
|\Phi_\ell |\le \widetilde{C}^{-1} \frac{\Lambda ^\ell}{\Gamma (\ell \beta )}(t-\tau)^{-\frac{n}{2}-1+\ell \beta}e^{-c\varrho ^2},\quad \ell \ge 2,
\]
where  $\Lambda =C\widetilde{C}\Gamma (\beta)$. Since $t-\tau \le 1$, we obtain
\begin{equation}\label{fs15}
|\Phi_\ell |\le \widetilde{C}^{-1} \frac{\Lambda ^\ell}{\Gamma (\ell \beta )}(t-\tau)^{-\frac{n}{2}-1+ \beta}e^{-c\varrho ^2},\quad \ell \ge 2,
\end{equation}

If $\overline{C}=\widetilde{C}^{-1} $, then \eqref{fs15} takes the form
\begin{equation}\label{fs16}
|\Phi_\ell |\le\overline{C}\frac{\Lambda ^\ell}{\Gamma (\ell \beta )}(t-\tau)^{-\frac{n}{2}-1+ \beta}e^{-c\varrho ^2},\quad \ell \ge 2.
\end{equation}

From Stirling's formula for the $\Gamma$ function (see for instance \cite[Chapter V, Section 3]{LV}) we have
\[
\Gamma (x+1 )\sim x^x e^{-x}\sqrt{2\pi x}, \quad x \to \infty .
\]

Therefore, the series
\begin{equation} \label{S}
S=C+\overline{C}\sum_{\ell \ge 2}\frac{\Lambda ^\ell}{\Gamma (\ell \beta )}
\end{equation}
is convergent.

We get from \eqref{fs14} and \eqref{fs16}
\begin{equation}\label{fs17}
|\Phi |\le S(t-\tau)^{-\frac{n}{2}-1+ \beta}e^{-c\varrho ^2}.
\end{equation}

\subsection{The upper bound}

In light of \eqref{fs14} and \eqref{fs17}, Lemma \ref{lemma2.1} yields
\begin{equation}\label{fs19}
\left|  \int_\tau^t\int_{\mathbb{R}^n}Z(x,t;\eta ,\sigma )\Phi (\eta ,\sigma ; \xi ,\tau )d\eta d\sigma\right|\le \frac{SB(1,\beta )}{(\kappa c)^{\frac{n}{2}}}(t-\tau)^{-\frac{n}{2}+ \beta}e^{-c\varrho ^2},
\end{equation}
for all $(x,t,\xi ,\tau )\in Q$ and  $t-\tau \le 1$.

Let 
\[
\widehat{C}=\frac{1}{(4\kappa \pi )^{\frac{n}{2}}}+\frac{SB(1,\beta )}{(\kappa c)^{\frac{n}{2}}}.
\]

As an immediate consequence of \eqref{fs14} and \eqref{fs19}, we have
\begin{equation}\label{fs20}
|E(x,t;\xi ,\tau )|\le \widehat{C}(t-\tau)^{-\frac{n}{2}}e^{-c\varrho ^2},\quad (x,t,\xi ,\tau )\in Q,\; t-\tau \le 1.
\end{equation}

We recall that $E$ possesses the so-called reproducing property
\begin{equation}\label{rp}
E(x,t;\xi ,\tau )=\int_{\mathbb{R}^n}E(x,t;\eta ,\sigma )E(\eta ,\sigma ;\xi ,\tau )\, d\eta,\quad \tau< \sigma < t .
\end{equation}

Applying \eqref{fs20}, we get
\begin{equation}\label{fs21}
|E(x,t,\xi ,\tau )|\le \widehat{C}^2\int_{\mathbb{R}^n}(t-\sigma )^{-\frac{n}{2}}e^{-c\frac{|x-\eta |^2}{4(t-\sigma )}}(\sigma -\tau)^{-\frac{n}{2}}e^{-c\frac{|\eta -\xi|^2}{4(\sigma -\tau )}}d\eta ,
\end{equation}
for all $t-\tau \le 2$, where $\sigma =\frac{t+\tau}{2}$.

We introduce a variable $z$ so that
\[
c\frac{|x-\eta |^2}{4(t-\sigma )}+c\frac{|\eta -\xi|^2}{4(\sigma -\tau )}=c\frac{|x -\xi|^2}{4(t -\tau )}+|z|^2.
\]
Using the identity $|x-\eta |^2=|x-\xi |^2+|\xi -\eta |^2+\langle x-\xi ,\xi -\eta \rangle$, we get
\begin{align*}
\frac{|x-\eta |^2}{t-\sigma }+\frac{|\eta -\xi|^2}{\sigma -\tau }&-\frac{|x -\xi|^2}{t -\tau }
\\
&=\frac{(\sigma -\tau )|x-\xi |^2}{(t-\sigma)(t-\tau)} +\frac{(t-\tau )|\eta -\xi| ^2}{(t-\sigma )(\sigma -\tau )} +\frac{2\langle x-\xi ,\xi -\eta \rangle}{(t-\sigma)^2} .
\\
&=\left| \left(\frac{\sigma -\tau}{(t-\sigma)(t-\tau)}\right)^{\frac{1}{2}} (x-\xi ) +\left(\frac{t-\tau}{(t-\sigma )(\sigma -\tau )}\right)^{\frac{1}{2}}(\xi -\eta ) \right|^2 .
\end{align*}
Therefore, we can for instance take
\[
z=\left(c\frac{t-\tau}{t-\sigma}\right)^{\frac{1}{2}}\frac{\eta -\xi}{2(\sigma -\tau)^{\frac{1}{2}}}+\left(c\frac{\sigma -\tau}{t-\sigma}\right)^{\frac{1}{2}}\frac{\xi -x}{2(t -\tau)^{\frac{1}{2}}}.
\]

Passing to the variable $z$ in \eqref{fs21}, we deduce
\[
|E(x,t,\xi ,\tau )|\le \widetilde{C} \widehat{C}^2 (t-\tau )^{-\frac{n}{2}}e^{-c\varrho ^2},\quad t-\tau \le 2.
\]

Next assume that $t-\tau >2$ and let $m$ be the smallest integer so that $t-\tau \le m$. Define
\[
\sigma _0=\tau ,\quad \sigma_1=\tau +\frac{t-\tau}{m},\ldots ,\sigma_{m-1}=\tau+(m-1)\frac{t-\tau}{m},\quad \sigma_m=t.
\]

Iterating the reproducing property \eqref{rp}, we get
\begin{align*}
E(x,t;\xi ,\tau )=\int_{\mathbb{R}^n}\ldots\int_{\mathbb{R}^m} E(x,\sigma_m,\eta _m,\sigma_{m-1})E(\eta_m ,&\sigma_{m-1} ,\eta_{m-1} ,\sigma_{m-2})
\\
&\ldots E(\eta _1,\sigma_1,\xi ,\sigma_0)d\eta _1\ldots d\eta_m.
\end{align*}
Repeating inductively the case $m=2$, we find
\[
|E(x,t,\xi ,\tau )|\le \widetilde{C}^{m-1} \widehat{C}^{m} (t-\tau )^{-\frac{n}{2}}e^{-c\varrho ^2}.
\]
This and the fact that $m<t-\tau+1$ entail
\[
|E(x,t,\xi ,\tau )|\le \widetilde{C}^{-1} e^{\max\left(0,\ln (\widetilde{C}\widehat{C})\right)}e^{\max\left(0,\ln (\widetilde{C}\widehat{C})\right)(t-\tau )} (t-\tau )^{-\frac{n}{2}}e^{-c\varrho ^2}.
\]
This is the expected Gaussian upper bound.

A more precise upper bound can be obtained by optimizing the constants appearing in the previous computations. We do it in the special case $b_i=q=0$, where the iteration procedure based on \eqref{rp} is not needed.

\begin{corollary} \label{preciseupper}
If $b_i=q=0$, then 
\[
E(x,t;\xi, \tau) \le \frac{1}{(4\kappa \pi)^{\frac{n}{2}}}(t-\tau)^{-\frac{n}{2}}e^{-\frac{\varrho^2}{4M}}\left (1+c_1(t-\tau)^{\frac{\alpha}{2}}e^{c_2 \left ((t-\tau)+ \varrho^\gamma \right)}\right ),
\]
for all $(x,t,\xi ,\tau )\in Q$, where $\varrho =\frac{|x-\xi |}{\sqrt{t-\tau}}$ and $\gamma=\frac{4\alpha+8}{3\alpha+4}<2$.
\end{corollary}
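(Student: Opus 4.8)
The plan is to split the representation \eqref{fs18} into its main term and a remainder, writing $E=Z+R$ with $R(x,t;\xi,\tau)=\int_\tau^t\int_{\mathbb{R}^n}Z(x,t;\eta,\sigma)\Phi(\eta,\sigma;\xi,\tau)\,d\eta\,d\sigma$. The leading factor $\frac{1}{(4\kappa\pi)^{n/2}}(t-\tau)^{-n/2}e^{-\varrho^2/(4M)}$ — i.e.\ the ``$1$'' inside the bracket — is exactly the pointwise bound \eqref{fs13} for $Z$, so the whole content of the corollary is to dominate $R$ by this same quantity times $c_1(t-\tau)^{\alpha/2}e^{c_2((t-\tau)+\varrho^\gamma)}$. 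First I would record that, since $b_i=q=0$, the lower–order term carrying $N_2$ drops out of $\Psi$; hence \eqref{fs11} holds with no restriction on $t-\tau$, and so does every subsequent estimate derived from it. This is precisely why no iteration of the reproducing property \eqref{rp} is needed here: the series defining $\Phi$ may be summed directly for all $(x,t,\xi,\tau)\in Q$.

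The crucial point is to keep the sharp Gaussian coefficient $\frac{1}{4M}$ in place of the coefficient $c=\frac{1}{8M}$ of the general bound. Following the device of Remark \ref{rem-ge1}, I would fix $\epsilon\in(0,1)$ and write $e^{-\varrho^2/(4M)}=e^{-\frac{\epsilon}{4M}\varrho^2}\,e^{-\frac{1-\epsilon}{4M}\varrho^2}$, absorbing the polynomial prefactor of $\Psi$ (which grows like $\varrho^{2+\alpha}$ once $b_i=q=0$) into the deficit factor $e^{-\frac{1-\epsilon}{4M}\varrho^2}$. This reproduces \eqref{fs14} with $c$ replaced by $\frac{\epsilon}{4M}$ and with the constant $C$ of \eqref{e} replaced by a constant $C^\epsilon$ blowing up like $(1-\epsilon)^{-(2+\alpha)/2}$ as $\epsilon\to1$. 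Running the induction \eqref{fs15}--\eqref{fs16} verbatim, Lemma \ref{lemma2.1} and Stirling's formula then give a bound for $|\Phi|$ in which the $\ell$–series becomes a Mittag--Leffler type sum in $\Lambda^\epsilon(t-\tau)^\beta$; its asymptotics $\sum_\ell \Lambda^\ell z^\ell/\Gamma(\ell\beta)\sim \exp(z^{1/\beta})$ produce both the growth $e^{c_2(t-\tau)}$ for large $t-\tau$ and, through $(\Lambda^\epsilon)^{1/\beta}\sim(1-\epsilon)^{-(2+\alpha)/\alpha}$, the dependence on the free parameter $\epsilon$.

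A final application of Lemma \ref{lemma2.1}, convolving $Z$ against this bound for $\Phi$, gains the factor $(t-\tau)^{\alpha/2}$ (from $B(1,\beta)$) and leaves the Gaussian $e^{-\frac{\epsilon}{4M}\varrho^2}=e^{-\varrho^2/(4M)}e^{\frac{1-\epsilon}{4M}\varrho^2}$. Thus $R$ is controlled, for every $\epsilon\in(0,1)$, by the main term times $(t-\tau)^{\alpha/2}$ times a product of the form $\exp\!\big(c(1-\epsilon)^{-(2+\alpha)/\alpha}(t-\tau)\big)\exp\!\big(\tfrac{1-\epsilon}{4M}\varrho^2\big)$. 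I would then choose $\epsilon=\epsilon(\varrho)$ so as to minimize this expression: balancing the Gaussian deficit $\tfrac{1-\epsilon}{4M}\varrho^2$ against the blow-up of the series constant is the optimization that trades the quadratic exponent $\varrho^2$ for the sub-quadratic $\varrho^\gamma$, yielding the stated $\gamma=\frac{4\alpha+8}{3\alpha+4}<2$, while the residual $t$–dependence is collected into $e^{c_2(t-\tau)}$.

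The main obstacle, as I see it, is the tension built into Lemma \ref{lemma2.1}: it convolves two Gaussians \emph{sharply} only when they share the same coefficient, so keeping that coefficient at the critical value $\frac{1}{4M}$ is incompatible with the polynomial growth of $\Psi$ unless part of that growth is traded away. Quantifying this trade-off — tracking how $C^\epsilon$, and hence the Mittag--Leffler sum, degenerates as $\epsilon\to1$, and then performing the $\epsilon$–optimization cleanly enough to read off the precise exponent $\gamma$ together with the additive splitting $e^{c_2((t-\tau)+\varrho^\gamma)}$ — is where the real work lies; the remaining steps are a faithful repetition of the computations already carried out above.
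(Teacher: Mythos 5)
Your plan retraces the paper's proof step for step: the same decomposition $E=Z+R$ from \eqref{fs18} with \eqref{fs13} giving the ``$1$'' in the bracket, the same observation that $b_i=q=0$ (i.e.\ $N_2=0$) removes the restriction $t-\tau\le 1$ so that no iteration of \eqref{rp} is needed, the same $\epsilon$-splitting of the Gaussian $e^{-\varrho^2/(4M)}$, the same Mittag--Leffler summation of the series in $\Lambda^\epsilon(t-\tau)^\beta$, and the same final optimization in $\epsilon$. So this is not a different route; the question is whether your version of the critical last step closes, and as written it does not.

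The gap is exactly at the point you defer to ``where the real work lies.'' You correctly record the series bound in \emph{product} form: the entire-function (order $1/\beta$) estimate gives $\sum_\ell (\Lambda^\epsilon)^\ell(t-\tau)^{\ell\beta}/\Gamma(\ell\beta)\lesssim \exp\bigl(c\,(\Lambda^\epsilon)^{1/\beta}(t-\tau)\bigr)$, i.e.\ the $\epsilon$-dependent constant \emph{multiplies} $(t-\tau)$. From this form the stated corollary cannot be extracted by taking $\epsilon=\epsilon(\varrho)$: any choice with $1-\epsilon\to 0$ as $\varrho\to\infty$ makes the coefficient of $(t-\tau)$ blow up with $\varrho$, so the ``residual $t$-dependence'' is not $e^{c_2(t-\tau)}$ with a constant $c_2$. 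Optimizing jointly instead gives the geometric-mean cross term $\bigl(\varrho^{2}\bigr)^{q/(q+1)}(t-\tau)^{1/(q+1)}$, where $q$ is the blow-up exponent of $(\Lambda^\epsilon)^{1/\beta}$, and this is \emph{not} dominated by $c\bigl((t-\tau)+\varrho^\gamma\bigr)$: for $t-\tau=\varrho^{2-\delta}$ with $\delta$ small, the cross term is $\varrho^{2-\delta/(q+1)}$, which exceeds both $(t-\tau)$ and $\varrho^\gamma$. The paper sidesteps this by asserting the \emph{additive} bound $S\le c_1e^{c_2((t-\tau)+\epsilon^{-(2+4/\alpha)})}$ directly after invoking the Chabat reference, i.e.\ with the $\epsilon$-dependence and the $(t-\tau)$-dependence already decoupled in the exponent (whether that assertion itself is justified is a separate question, since $AB\le c(A+B)$ fails when both factors are large); that decoupling is precisely what your correct product form does not supply, so your plan is missing the idea that produces the additive splitting. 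A smaller inconsistency: your (sharp) blow-up rate $C^\epsilon\sim(1-\epsilon)^{-(2+\alpha)/2}$ would, if the optimization went through, yield $\gamma=\frac{\alpha+2}{\alpha+1}$, not the stated $\frac{4\alpha+8}{3\alpha+4}$; the paper's value of $\gamma$ corresponds to its cruder estimate $C_\epsilon\lesssim\epsilon^{-(2+\alpha)}$ in \eqref{e}, so you should either keep the crude bound or claim the stronger exponent.
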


\begin{proof}
First we note that the restriction $t-\tau \le 1$ is not needed in \eqref{fs12}, since it comes from \eqref{fs9.1} only. Then we define $C_\epsilon$ as in \eqref{e} with $c=\frac{\epsilon}{4M}$, $N_2=0$. It is easy to see that  $C_\epsilon \le A\epsilon^{-2-\alpha}$ with $A>0$ and this leads to  \eqref{fs14} with this $C_\epsilon$ and $c=\frac{(1-\epsilon)}{4M}$. Next we write \eqref{fs16} with $\ell \beta$ instead of $\beta$, since we no longer assume that $t-\tau \le 1$.

Entering this estimate in the constants $C, \Lambda$  defining   $S$ (see \eqref{S}), using \cite[Theorem 2, Section 15, Chapter V]{Chabat} and Stirling's formula again, we deduce that
\[
\sum_{\ell \ge 2} \frac{\Lambda^\ell (t-\tau)^{\ell \beta} }{\Gamma(\ell \beta)}\le c_1  (t-\tau)^{2\beta}e^{c_2((t-\tau)+\Lambda^{\frac{1}{\beta}})}
\]
and $S \le c_1 e^{c_2 ((t-\tau)+\epsilon^{-(2+\frac{4}{\alpha})})}$.  Then we use this estimate in  \eqref{fs19} with $c=\frac{(1-\epsilon)}{4M}$ to get 
\begin{align*}
&\left|  \int_\tau^t\int_{\mathbb{R}^n}Z(x,t;\eta ,\sigma )\Phi (\eta ,\sigma ; \xi ,\eta )d\eta d\sigma\right|
\\
&\hskip 3cm \le c_1(t-\tau)^{-\frac{n}{2}+\beta}e^{-\frac{(1-\epsilon)}{4M}\varrho ^2+c_2\epsilon^{-(2+\frac{4}{\alpha})}+c_2(t-\tau)}.
\end{align*}
Optimizing over $\epsilon$ and using \eqref{fs18}, the corollary follows. 
\end{proof}

\subsection{The lower bound}

From the previous analysis, we easily get
\[
Z(x,t;\xi ,\tau )\ge \frac{1}{(4\pi M )^{\frac{n}{2}}}(t-\tau)^{-\frac{n}{2}}e^{-\frac{1}{\kappa}\rho ^2}.
\]
Hence,
\begin{equation}\label{fs22}
Z(x,t;\xi ,\tau )\ge \frac{e^{-1}}{(4\pi M )^{\frac{n}{2}}}(t-\tau)^{-\frac{n}{2}},\quad  |x-\xi |^2\le \kappa (t-\tau ).
\end{equation}

A combination of \eqref{fs19} and \eqref{fs22} yields
\[
E(x,t;\xi ,\tau )\ge \frac{e^{-1}}{(4\pi M )^{\frac{n}{2}}}(t-\tau)^{-\frac{n}{2}}-\frac{SB(1,\beta )}{(\kappa c)^{\frac{n}{2}}}(t-\tau)^{-\frac{n}{2}+ \beta} ,
\]
for all $|x-\xi |^2\le \kappa (t-\tau )$ and   $t-\tau \le 1$.

Fix $\delta \le 1$ sufficiently small in such a way that
\[
\frac{e^{-1}}{(4\pi M )^{\frac{n}{2}}}-\frac{SB(1,\beta )}{(\kappa c)^{\frac{n}{2}}}\delta ^\beta\ge \frac{e^{-1}}{2(4\pi M )^{\frac{n}{2}}}.
\]
Then, with $\mu=\frac{e^{-1}}{2(4\pi M )^{\frac{n}{2}}}$,
\begin{equation}\label{fs23}
E(x,t;\xi ,\tau )\ge \mu (t-\tau)^{-\frac{n}{2}} ,\quad  |x-\xi |^2\le \kappa (t-\tau ),\; t-\tau\le \delta .
\end{equation}

Let $x$ and $\xi$ be given so that $2|x-\xi | >\sqrt{\kappa (t-\tau)}$ and let $m\ge 2$ be the smallest integer so that
\begin{equation}\label{fs24}
\frac{4|x-\xi |^2}{m}\le \kappa (t-\tau).
\end{equation}

Define the sequence $(x_k)_{0\le k\le m}$
\[
x_k=x+\frac{k}{m}(\xi -x ),\quad 0\le k\le m.
\]
Set
\[
r=\frac{1}{4}\frac{\sqrt{\kappa (t-\tau)}}{\sqrt{m}}
\]
and
\[
\sigma _k=\tau +\frac{k}{m}(t-\tau ),\quad 0\le k\le m.
\]
Using \eqref{fs23}, the positivity of $E$ and the reproducing property, we get
\begin{align*}
E(x,t;&\xi ,\tau)
\\
&\ge \mu ^m\int_{B(x_1,r)}\ldots \int_{B(x_{m-1},r)} (\sigma _1-\sigma_0)^{-\frac{n}{2}}\ldots (\sigma _m-\sigma_{m-1})^{-\frac{n}{2}}d\eta_1\ldots d\eta_{m-1},
\end{align*}
where we used
\[
|x_{i+1}-x_i|=\frac{1}{\sqrt{m}}\frac{|x-\xi|}{\sqrt{m}}\le \frac{1}{2}\frac{\sqrt{\kappa (t-\tau)}}{\sqrt{m}}=2r,
\]
and
\begin{align*}
|\eta _{i+1}-\eta_i|\le |\eta_{i+1}&-x_{i+1}|+|x_{i+1}-x_i|+|x_i-\eta _i|
\\
&<2r+|x_{i+1}-x_i|\le 4r=\frac{\sqrt{\kappa (t-\tau)}}{\sqrt{m}}=\sqrt{\kappa (\sigma_{i+1}-\sigma_i)}.
\end{align*}
Whence
\[
E(x,t;\xi ,\tau )\ge \kappa ^{-\frac{n}{2}}\nu ^m (t-\tau)^{-\frac{n}{2}},
\]
with
\[
\nu =\frac{\kappa^{\frac{n}{2}}}{e M^{\frac{n}{2}}2^{3n}\Gamma (n/2+1)} <1.
\]
Noting that
\[
m<\frac{4|x-\xi|^2}{\kappa (t-\tau)}+1,
\]
we obtain
\begin{align*}
E(x,t;\xi ,\tau )\ge \kappa ^{-\frac{n}{2}}&e^{-|\ln \nu |m}(t-\tau)^{-\frac{n}{2}}
\\
&\ge \kappa ^{-\frac{n}{2}}e^{-|\ln \nu |}(t-\tau)^{-\frac{n}{2}}e^{-\frac{4|\ln \nu |}{\kappa} \frac{|x-\xi|^2}{t-\tau}},\quad t-\tau \le \delta .
\end{align*}

If $C_0=\min \left(\mu ,\kappa ^{-\frac{n}{2}}e^{-|\ln \nu |}\right)$ and $d=\frac{4|\ln \nu |}{\kappa}$, then the last inequality and \eqref{fs23} yield
\[
E(x,t;\xi ,\tau )\ge C_0(t-\tau)^{-\frac{n}{2}}e^{-d\frac{|x-\xi|^2}{t-\tau}},\quad t-\tau \le \delta .
\]

We now proceed similarly to the case of the upper bound to remove the condition $t-\tau\le \delta$. If $m$ is the smallest integer so that $t-\tau \le m\delta$, we get
\[
E(x,t;\xi ,\tau )\ge \widetilde{C}^{-1}\left(\widetilde{C}C_0\right)^m(t-\tau)^{-\frac{n}{2}}e^{-d\varrho ^2},
\]
from which we deduce
\[
E(x,t;\xi ,\tau )\ge\widetilde{C}^{-1} e^{\min\left(0, \ln (\widetilde{C}C_0)\right)}e^{\min\left(0,\frac{\ln (\widetilde{C}C_0)}{\delta}\right)(t-\tau )} (t-\tau )^{-\frac{n}{2}}e^{-d\varrho ^2}.
\]

\end{document}